\documentclass[12pt]{amsart}
\usepackage{url}
\usepackage{pslatex}
\usepackage{amsmath,amsthm,amssymb,amsfonts, stmaryrd}
\usepackage{fancybox}
\usepackage{color, graphicx}

\usepackage[usenames,dvipsnames,svgnames,table]{xcolor}

\newtheorem{theorem}{Theorem}[section]

\newtheorem{conjecture}[theorem]{Conjecture}
\newtheorem{corollary}[theorem]{Corollary}

\newtheorem{notation}[theorem]{Notation}

\newtheorem{proposition}[theorem]{Proposition}

\newcommand{\Z}{\mathbb Z}

\newcommand{\N}{\mathbb N}

\newcommand{\ap}{\textrm{Ap}}

\newcommand{\vs}[1]{\langle #1 \rangle}

\begin{document}

\title{Near-misses in Wilf's conjecture}
\author{Shalom Eliahou and Jean Fromentin}

\address{Shalom Eliahou, Univ. Littoral C\^ote d'Opale, EA 2597 - LMPA - Laboratoire de Math\'ematiques Pures et Appliqu\'ees Joseph Liouville, F-62228 Calais, France and CNRS, FR 2956, France} \email{eliahou@univ-littoral.fr}

\address{Jean Fromentin, Univ. Littoral C\^ote d'Opale, EA 2597 - LMPA - Laboratoire de Math\'ematiques Pures et Appliqu\'ees Joseph Liouville, F-62228 Calais, France and CNRS, FR 2956, France} \email{fromentin@math.cnrs.fr}

\maketitle
\vspace{-1em}
\date{}

\begin{abstract} Let $S \subseteq \N$ be a numerical semigroup with multiplicity $m$, conductor $c$ and minimal generating set $P$. Let $L=S \cap [0,c-1]$ and $W(S)=|P||L|-c$. In 1978, Herbert Wilf asked whether $W(S) \ge 0$ always holds, a question known as Wilf's conjecture and open since then. A related number $W_0(S)$, satisfying $W_0(S) \le W(S)$, has recently been introduced. We say that $S$ is a \emph{near-miss in Wilf's conjecture} if $W_0(S)<0$. Near-misses are very rare. Here we construct infinite families of them, with $c=4m$ and $W_0(S)$ arbitrarily small, and we show that the members of these families still satisfy Wilf's conjecture.
\end{abstract}

\smallskip
\noindent
\keywords{\textbf{Keywords.} Numerical semigroup; conductor; Ap\'ery element; Sidon set; additive combinatorics.}

\medskip

\section{Introduction}\label{introduction} 
Let $\N=\{0,1,2,\dots\}$ and $\N_+=\N\setminus\{0\}$. Given rational numbers $a \le b$, we denote $[a,b]=\{z \in \Z \mid a \le z \le b\}$ the \emph{integer interval} they span, and $[a,\infty[\ =\{z \in \Z \mid z \ge a\}$.

Let $S \subseteq \N$ be a \emph{numerical semigroup}, \emph{i.e.} a submonoid  containing $0$ and with finite complement in $\N$. The \emph{genus} of $S$ is $g(S)=|\N \setminus S|$, its \emph{Frobenius number} is $F(S)=\max (\Z \setminus S)$ and its \emph{conductor} is $c = F(S)+1$. Thus $c+\N \subseteq S$, and $c$ is minimal for this property. Let  $S^*= S \setminus \{0\}$. The \emph{multiplicity} of $S$ is $m=\min S^*$. As in~\cite{E}, we shall denote 
\begin{equation}\label{q, rho}
q = \lceil c/m \rceil \ \textrm{ and }\  \rho=qm-c;
\end{equation}
thus $c=qm-\rho$ and $0 \le \rho \le m-1$. An element $a \in S^*$ is \emph{primitive} if it cannot be written as $a=a_1+a_2$ with $a_1,a_2 \in S^*$. As easily seen, the subset $P \subset S^*$ of primitive elements is contained in the integer interval $[m,m+~c-~1]$. Therefore $P$ is finite, and it generates $S$ as a monoid since every nonzero element in $S$ is a sum of primitive elements. It is well-known and easy to see that $P$ is the \emph{unique minimal generating set} of $S$. Its cardinality $|P|$ is known as the \emph{embedding dimension} of $S$. We shall denote by $D \subset S$ the set of \emph{decomposable elements}, \emph{i.e.} $D = S^*+S^*=S^* \setminus P$. See~\cite{R,RG} for extensive information about numerical semigroups.

\subsection{Wilf's conjecture}
Let $L=S \cap [0,c-1]$, the set of elements of $S$ to the left of its conductor $c$, and denote
$$
W(S) = |P||L|-c.
$$
In 1978, Herbert Wilf asked, in equivalent terms, whether the inequality $$W(S) \ge 0$$ always holds~\cite{W}. This question is known as \emph{Wilf's conjecture}. So far, it has only been settled for a few families of numerical semigroups, including the five independent cases $|P| \le 3$, $|L| \le 4$, $m \le 8$, $g \le 60$ and $q \le 3$. See~\cite{Br08,D,DM,E,FGH,FH,K,MS,Sa} for more details.

\subsection{The number $W_0(S)$}
Denote $S_q=[c,c+m-1]$ and $D_q = D \cap S_q = S_q \setminus P$. We may now define the closely related number $W_0(S)$ introduced in~\cite{E}. It  involves $|P \cap L|$ rather than $|P|$ as in $W(S)$, as well as $D_q$ and the numbers $q, \rho$ given by \eqref{q, rho}.

\begin{notation} Let $S$ be a numerical semigroup. We set
$$W_0(S)  =  |P \cap L||L| - q|D_q| + \rho.$$
\end{notation}

As we shall see in the next section, we have $W(S) \ge W_0(S)$. In particular, if $W_0(S) \ge 0$, then $S$ satisfies Wilf's conjecture. The case $W_0(S)<0$ seems to be extremely rare. The first instances were discovered in 2015 by the second author while performing an  exhaustive computer check of numerical semigroups up to genus 60. Here is the outcome.

\medskip
\noindent
\textbf{Computational result}. \textit{The more than $10^{13}$ numerical semigroups $S$ of genus $g \le 60$ all satisfy $W_0(S) \ge 0$, with exactly 5 exceptions. These 5 exceptions satisfy $W_0(S)=-1$, $W(S) \ge 35$ and $g \in \{43, 51, 55, 59\}$.}

\medskip
We shall describe these five exceptions in the next section. Prompted by their unexpected existence, we say that $S$ is a \emph{near-miss} in Wilf's conjecture if $W_0(S) < 0$. 

The next instances of near-misses were discovered by Manuel Delgado. More precisely, he proved the following result by explicit construction.
\begin{theorem}[\cite{D}] For any $z \in \Z$, there exist infinitely many numerical semigroups $S$ such that $W_0(S)=z$. 
\end{theorem}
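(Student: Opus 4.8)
The plan is to prove the statement by explicit construction, producing for each target value $z \in \Z$ an infinite family of numerical semigroups $S$ with $W_0(S)=z$. The quantity $W_0(S) = |P \cap L||L| - q|D_q| + \rho$ is assembled from data that one can hope to control directly: the Apéry-type sets governing $L$, the primitive generators, and the decomposable elements in the top block $S_q=[c,c+m-1]$. First I would fix the shape of the semigroups so that the arithmetic stays tractable: choose the multiplicity $m$ and the parameter $q$ (hence $\rho$ and $c=qm-\rho$) as free handles, and design $S$ so that $|L|$ and $|P\cap L|$ are easy to read off while $|D_q|$ can be tuned. The natural device is to prescribe the Apéry set $\ap(S,m)=\{\min(s\in S : s\equiv i \pmod m)\}_{i=0}^{m-1}$, since every numerical semigroup with multiplicity $m$ is determined by these $m$ residue-minimal elements, and both $L$ and the decomposition structure are governed by them.

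Next I would reduce the identity $W_0(S)=z$ to a single controllable parameter. The cleanest route is to build a one-parameter family $S_n$ (indexed by $n \in \N$) in which $|P\cap L|$, $|L|$, $q$ and $\rho$ are all constant in $n$, while $|D_q|$ either stays fixed or grows in a fully predictable way; then $W_0(S_n)$ is constant in $n$, giving infinitely many semigroups with the same value, and a separate discrete handle (say the choice of a small integer block of generators) lets one slide that constant value across all of $\Z$. Concretely, I expect the construction to place most of the multiplicity's worth of residues so that their minimal representatives sit just below $c$ (contributing to $L$ and forcing the count $|L|$), reserve a controlled set of residues whose minimal representatives land in the top block $[c,c+m-1]$ as decomposable elements (these are exactly the elements counted by $D_q$, since $S_q\setminus P = D_q$), and use the free index $n$ to inflate $m$ (and correspondingly $c$) while preserving all ratios. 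Verifying that the prescribed Apéry data actually defines a numerical semigroup — i.e. that the chosen residue-minimal elements satisfy the compatibility inequalities $\ap_i + \ap_j \ge \ap_{i+j \bmod m}$ — is a routine but essential check.

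The main obstacle, and the step I would spend the most care on, is simultaneously controlling $|P\cap L|$ and $|D_q|$: these two quantities are in tension, because an element landing in the top block $[c,c+m-1]$ is decomposable (counted in $D_q$) precisely when it is a sum of two nonzero semigroup elements, and whether that happens depends on the global additive structure one has imposed lower down. I would isolate a clean combinatorial condition — likely a Sidon-set or sum-freeness condition on the generators, consistent with the keywords advertised in the abstract — that guarantees a prescribed element of $S_q$ is primitive or decomposable as desired, so that $|D_q|$ equals exactly its intended value rather than merely a bound. Once $|P\cap L|$, $|L|$, $q$, $\rho$ and $|D_q|$ are each pinned down as explicit functions of the construction parameters, computing $W_0(S)=|P\cap L||L|-q|D_q|+\rho$ is a direct substitution, and adjusting a single additive parameter to hit an arbitrary integer $z$ — while letting the family index $n$ range freely to produce infinitely many semigroups for each $z$ — completes the argument.
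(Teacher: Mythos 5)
This theorem is not proved in the paper at all: it is quoted from Delgado's article \cite{D}, and the present paper only records (in the paragraph following the statement) the key structural feature of his construction, namely that $|P\cap L|$ stays fixed at $3$ while $q\to\infty$. So there is no in-paper proof to match your proposal against; what can be assessed is whether your plan would actually yield the result, and as written it would not. Your text is a strategy outline, not a proof: at no point do you exhibit a concrete family of semigroups, a concrete Ap\'ery set, or a single computed value of $W_0$. The entire difficulty of the theorem is concentrated in the sentence where you say that ``adjusting a single additive parameter to hit an arbitrary integer $z$ \dots completes the argument''--- that adjustment \emph{is} the theorem, and you give no mechanism for it. For $z<0$ this is genuinely delicate: the paper's computational result shows that among more than $10^{13}$ semigroups of genus $\le 60$ only five have $W_0(S)<0$, so negative values cannot be reached by generic tuning; one needs a specific additive-combinatorial configuration (in this paper, a $B_3$ set filling $X_3$ and $X_4\cap D$ with $\binom{n}{2}$ and $\binom{n+1}{3}$ elements) to force $q|D_q|$ to overtake $|P\cap L||L|$.

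There is also a structural reason to doubt the specific shape you propose. You suggest keeping $q$ (and essentially $|P\cap L|$) fixed while inflating $m$, with $W_0(S_n)$ constant in $n$. But the paper's own fixed-$q$ construction ($q=4$) realizes only the sparse sequence $W_0(S)=-\binom{n}{3}$, and Conjecture~\ref{conj 1} asserts this is optimal for $q=4$: with $|P\cap L|=n$ one cannot go below $-\binom{n}{3}$, and intermediate negative values such as $-2$ or $-3$ are not produced by this route. Delgado's construction reaches \emph{every} integer $z$ precisely by letting $q$ grow with $|P\cap L|=3$ held fixed, which is the opposite trade-off from the one you describe. To repair your proposal you would need either to adopt that regime and exhibit the explicit family with its verified Ap\'ery data and the computation $|P\cap L||L|-q|D_q|+\rho=z$, or to show how your fixed-$q$ families can be perturbed to change $W_0$ by exactly $1$ --- neither of which is present.
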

(The number $W_0(S)$ is denoted $E(S)$ in~\cite{D}.) He further proved that all the near-misses in his constructions satisfy Wilf's conjecture.

\smallskip
Our aim in this paper is to explain the structure of the original five near-misses of genus $g \le 60$, construct infinite families of similar ones, and show that again, they still satisfy Wilf's conjecture even though their numbers $W_0(S)$ get arbitrarily small in $\Z$.

\smallskip
Thus, both~\cite{D} and the present paper provide constructions of families of numerical semigroups $S$ such that $W_0(S)$ goes to minus infinity. The main difference is that in~\cite{D}, the cardinality $|P \cap L|$ remains constant at $3$ and $q$ goes to infinity, whereas here, the cardinality $|P \cap L|$ goes to infinity and $q$ remains constant at $4$.   In a sense, the case $q=4$ is best possible, as witnessed by the following result.

\begin{theorem}[\cite{E}] Let $S$ be a numerical semigroup such that $q \le 3$. Then $W_0(S) \ge 0$.
\end{theorem}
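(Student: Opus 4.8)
The plan is to first recast the inequality into a form where the multiplicity slices of $S$ become visible. Writing $p=|P\cap L|$, $\ell=|L|$ and $s=|P\cap S_q|$, and using that $P=(P\cap L)\sqcup(P\cap S_q)$, that $|D_q|=m-s$ since $S_q\subseteq S$ has exactly $m$ elements, and that $qm=c+\rho$, a one-line substitution turns the definition of $W_0$ into the identity
\[ W_0(S)=p\,\ell+q\,s-c. \]
Thus the theorem is equivalent to the inequality $p\,\ell+q\,s\ge c$. The cases $q\le 1$ force $S=\mathbb{N}$ or $c=m$, for which $W_0(S)=0$ directly, so I would assume $q\in\{2,3\}$. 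To exploit $q\le 3$ I would slice $\mathbb{N}$ into the intervals $I_j=[jm,(j+1)m-1]$ and record two elementary facts: every element of $S\cap I_1$ is primitive (a sum of two elements of $S^*$ is $\ge 2m$), and every decomposable element of $I_j$ is a sum of $S^*$-elements lying in strictly lower slices. Translating by the relevant multiple of $m$, each count of decomposables becomes the cardinality of a restricted sumset inside $[0,m-1]$.

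For $q=2$ this already closes the problem cleanly, and it is the model for the general case. Here $c\le 2m$, so $L\setminus\{0\}=S\cap[m,c-1]\subseteq I_1$ consists entirely of primitives; hence $P\cap L=L\setminus\{0\}=:B$ and $\ell=|B|+1$. A short check shows that any decomposable $d\in S_q$ must lie in $I_2$ and must be a sum of two elements of $B$, so $D_q$ injects into $B+B$ and $|D_q|\le|B+B|\le\binom{|B|+1}{2}$. Substituting into $W_0=p\ell-2|D_q|+\rho=|B|(|B|+1)-2|D_q|+\rho$ gives $W_0(S)\ge\rho\ge 0$. The whole force of the argument is the trivial upper bound $|B+B|\le\binom{|B|+1}{2}$ matched against the factor $q=2$.

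For $q=3$ I would run the same scheme, but now $L$ reaches into $I_2$ and may contain decomposable elements, and $S_q=[c,c+m-1]$ straddles $I_2$ and $I_3$. I would split $|D_q|=|D_q\cap I_2|+|D_q\cap I_3|$ and use $\ell=1+|P\cap L|+|D\cap L|$. The part $D_q\cap I_2$, together with $D\cap L$, accounts for all decomposables of $I_2$, which form the sumset $(A+A)\cap I_2$ with $A=S\cap I_1$; the quadratic bound $|A+A|\le\binom{|A|+1}{2}$ and the lower bound $|A+A|\ge 2|A|-1$ let me trade $|D_q\cap I_2|$ against $|D\cap L|$ and $|A|$. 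The part $D_q\cap I_3$ is governed by the mixed sumset $(A+L^*)\cap I_3$, where $L^*=L\setminus\{0\}=A\sqcup\widetilde C$ with $\widetilde C=S\cap[2m,c-1]$; here the cross term $A+\widetilde C$ must be controlled by the generators already present in $I_1$ and $I_2$. The right tool for this is a Macaulay / Kruskal--Katona type upper bound on how fast the slice-sizes of a numerical semigroup can grow, bounding the decomposables produced in $I_3$ by a function of $p=|P\cap L|$; for $q=3$ this is the cubic analogue of the quadratic estimate used above.

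The main obstacle is precisely the factor $q$. The naive injection $D_q\hookrightarrow(P\cap L)\times(L\setminus\{0\})$, $d\mapsto(x,d-x)$ with $x$ the smaller summand, only yields $|D_q|\le p(\ell-1)$, which loses a factor of about $q$ and is therefore useless once $q\ge 2$; the success for $q=2$ and the hoped-for success for $q=3$ rest instead on the quadratic/cubic sumset bounds, which are exactly tight against $q\ell$. I expect the genuinely delicate point to be the cross term $A+\widetilde C$ in the top slice: bounding it requires the Macaulay-type growth inequality rather than a bare sumset count, and the bookkeeping between $|D_q\cap I_2|$, $|D\cap L|$ and $|D_q\cap I_3|$ is where the slack provided by $+\rho$ is consumed. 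I would therefore expect to finish with a short case analysis on $\rho$ and on whether $I_2$ contributes new generators, the near-extremal configurations being the ones that make the inequality tight.
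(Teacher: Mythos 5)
The theorem you are asked to prove is not proved in this paper at all: it is quoted from the reference [E] (Eliahou, \emph{Wilf's conjecture and Macaulay's theorem}), so there is no internal proof to compare against. Your reformulation $W_0(S)=|P\cap L|\,|L|+q\,|P_q|-c$ is correct (it is the same identity as Proposition~\ref{W0} of the paper), and your treatment of $q\le 2$ is complete and correct: for $q=2$ every element of $L\setminus\{0\}$ is primitive, every element of $D_q$ is a sum of two elements of $B=L\setminus\{0\}$ (both summands are forced into $[m,c-1]$), and $|B+B|\le\binom{|B|+1}{2}$ gives $W_0(S)\ge\rho\ge 0$. This is essentially the known argument for $q=2$.

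The genuine gap is the case $q=3$, which is the entire substance of the theorem, and your proposal does not prove it --- it only names the tool you hope will work. The step ``the right tool for this is a Macaulay / Kruskal--Katona type upper bound \dots\ for $q=3$ this is the cubic analogue of the quadratic estimate'' is not a proof: you neither state the inequality you need (a bound on $|D_q|$ in terms of $|P\cap L|$ and the slice sizes $|X_1|,|X_2|$ that is tight against the factor $q=3$), nor explain how Macaulay's theorem yields it. Getting it requires building a standard graded algebra out of the Ap\'ery set of $S$ so that its Hilbert function controls the slices $|X_i|$, and then a nontrivial case analysis; this occupies most of [E] and cannot be dismissed as ``the cubic analogue.'' Your intermediate bookkeeping is also not justified as stated: the claim that $|A+A|\le\binom{|A|+1}{2}$ together with $|A+A|\ge 2|A|-1$ ``lets me trade $|D_q\cap I_2|$ against $|D\cap L|$ and $|A|$'' is an assertion, not an argument --- the two intersections $(A+A)\cap I_2$ and $(A+A)\cap I_3$ cannot be bounded separately in the required way without additional structural input, and this is exactly where the difficulty lies. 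Your diagnosis of \emph{why} the naive counting loses a factor of $q$, and your guess that Macaulay-type growth bounds are the right replacement, do match the actual method of [E]; but as a proof the $q=3$ case is missing.
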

Hence Wilf's conjecture holds for $q \le 3$. For $q=1$ this is trivial, and for $q=2$ this was first shown in ~\cite{K}. Informally, most numerical semigroups satisfy $q \le 3$, as proved by Zhai in~\cite{Z}. Combining these results, it follows that \emph{Wilf's conjecture is asymptotically true as the genus goes to infinity}.

\subsection{Contents}
In Section~\ref{basic}, we describe the original near-misses of genus $g \le 60$, we recall some basic notions and notation, and we compare the numbers $W(S)$ and $W_0(S)$. In Section~\ref{constructions} we construct, for any integer $n \ge 3$, a numerical semigroup $S$ for which $q=4$, $|P \cap L|=n$ and $W_0(S) = -\binom{n}{3}$. We start with the case $n=3$, and then generalize it to $n\ge 4$ using the notion of $B_h$ sets from additive combinatorics, specifically for $h=3$. In Section~\ref{satisfying Wilf}, we prove that the numerical semigroups $S$ constructed in Section~\ref{constructions} all satisfy $W(S) \ge 9$. The paper ends with the conjecture that our construction is optimal, in the sense that if $q=4$ and $|P \cap L|=n$, then probably $W_0(S) \ge -\binom{n}{3}$.

\section{Basic notions and notation}\label{basic}
Throughout this section, let $S \subseteq \N$ be a numerical semigroup with multiplicity $m$ and conductor $c$. Recall that $q = \lceil c/m\rceil$ and $\rho =qm-c$.

\subsection{On the near-misses of genus $g \le 60$}

As previously mentioned, up to genus $g \le 60$, there are exactly 5 near-misses in Wilf's conjecture. The following notation will be useful to describe them.
\begin{notation}\label{truncated}
Given positive integers $a_1,\dots,a_n, t$, we denote 
\begin{eqnarray*}
\vs{a_1,\dots,a_n} & = & \N a_1+\dots+\N a_n, \\
\vs{a_1,\dots,a_n}_t & = & \vs{a_1,\dots,a_n} \cup [t,\infty[.
\end{eqnarray*}
\end{notation}
As is well-known, $\vs{a_1,\dots,a_n}$ is a numerical semigroup if and only if $\gcd(a_1,\dots,a_n)=1$. On the other hand, $\vs{a_1,\dots,a_n}_t$ is always a numerical semigroup, even if $a_1,\dots,a_n$ are not globally coprime, and its conductor $c$ satisfies $c \le t$, with equality $c=t$ if and only if $t-1 \notin \vs{a_1,\dots,a_n}$.

\smallskip
The 5 near-misses up to genus 60 are given in Table~\ref{table}. They all satisfy $c=4m$, that is $q=4$ and $\rho=0$.

\begin{center}
\begin{table}
\begin{tabular}{|c|c|c|c|c|c|c|}
\hline
\ $S$ \ & $m$ & $|P|$ & $|L|$ & $g$ & $W_0(S)$ & $W(S)$ \\
\hline \hline
$\vs{14,22,23}_{56}$ & \ $14$ \ & \ $7$ \  & $13$ & 43 & $-1$ & 35 \\
\hline
$\vs{16,25,26}_{64}$ & $16$ & $9$\ & $13$   & 51 & $-1$  & 53   \\
\hline
$\vs{17,26,28}_{68} $ & $17$ & $10$ \  & $13$  & 55 & $-1$  & 62 \\
\hline
$\vs{17,27,28}_{68} $ & $17$ & $10$ \ & $13$   & 55 & $-1$  & 62   \\
\hline
$\vs{18,28,29}_{72}$ & $18$ & $11$ \  & $13$  & 59 & $-1$  & 71  \\
\hline
\end{tabular}
\medskip
\caption{All near-misses of genus $g \le 60$}
\label{table}
\end{table}
\end{center}

\subsection{Slicing $\N$} Coming back to our given numerical semigroup $S$, we shall denote $$I_q = [c,c+m-1],$$ the leftmost integer interval of length $m$ contained in $S$. More generally, for any $j \in \N$, let us denote by $I_j$ the translate of $I_q$ by $(j-q)m$. That is,
\begin{eqnarray*}
I_j & = & (j-q)m +[c,c+m-1]\\
& = & [c+(j-q)m,c+(j+1-q)m-1] \\ 
& = & [jm-\rho,(j+1)m-\rho-1].
\end{eqnarray*}
Let us also denote
$$
S_j = S \cap I_{j}.
$$
Observe that $S_j=I_j$ if and only if $j \ge q$. Thus $S_q=I_q$ but $S_j \subsetneq I_j $ for $j < q$. Note also that $S_0=\{0\}$ and that $jm \in S_j$. Finally, for $j \ge 1$, let us denote 
\begin{align*}
P_j &=P \cap S_j, \\ 
D_j &=D \cap S_j=S_j\setminus P_j.
\end{align*}

\subsection{Comparing $W(S)$ and $W_0(S)$}
Since $P \subseteq [m,c+m-1]$ as mentioned earlier, we have 
$$P = P_1 \cup \dots \cup P_q.$$
In particular, we have $P \cap L = P_1 \cup \dots \cup P_{q-1}= P \setminus P_q$. The following formula appears in~\cite{E}.
\begin{proposition}\label{W0} We have $W(S) = W_0(S)+|P_q|(|L|-q)$.
\end{proposition}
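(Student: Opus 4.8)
The plan is to expand both sides and reduce the identity to the elementary fact that $S_q$ is a full interval of length $m$. First I would split the primitive set according to the slicing already introduced. Since $P = P_1 \cup \dots \cup P_q$ is a disjoint union and $P \cap L = P_1 \cup \dots \cup P_{q-1}$, the piece $P_q$ is exactly the complement $P \setminus (P \cap L)$, so that
$$
|P| = |P \cap L| + |P_q|.
$$
Substituting this into $W(S) = |P||L| - c$ gives $W(S) = |P \cap L||L| + |P_q||L| - c$, which already isolates the term $|P_q||L|$ predicted by the proposed formula.

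Next I would subtract $W_0(S) = |P \cap L||L| - q|D_q| + \rho$ from this expression. The terms $|P \cap L||L|$ cancel, leaving
$$
W(S) - W_0(S) = |P_q||L| - c + q|D_q| - \rho,
$$
so the proposition reduces to the purely numerical identity $-c + q|D_q| - \rho = -q|P_q|$, equivalently $q\bigl(|P_q| + |D_q|\bigr) = c + \rho$.

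The key step is a counting observation: $S_q = I_q = [c,c+m-1]$ is an integer interval containing exactly $m$ elements, and it is partitioned into its primitive part $P_q$ and its decomposable part $D_q = S_q \setminus P_q$. Hence $|P_q| + |D_q| = m$. Combined with the defining relation $c = qm - \rho$, i.e. $c + \rho = qm$, this yields $q(|P_q| + |D_q|) = qm = c + \rho$, as required.

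The only subtlety worth flagging is that this cardinality count depends crucially on $S_q$ being a \emph{complete} interval of length $m$, which holds precisely because $I_q$ is the leftmost length-$m$ block lying entirely beyond the conductor: one has $S_j = I_j$ for $j \ge q$, but $S_j \subsetneq I_j$ for $j < q$, so it is essential that $W_0(S)$ is written in terms of $P_q$ and $D_q$ at the index $q$ rather than at a smaller one. Past this observation there is no genuine obstacle; the result is a direct computation once the decomposition $|P| = |P \cap L| + |P_q|$ and the equality $|P_q| + |D_q| = m$ are in hand.
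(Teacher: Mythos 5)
Your proof is correct and follows essentially the same route as the paper's: both rest on the two identities $|P| = |P\cap L| + |P_q|$ and $|P_q| + |D_q| = m$ together with $c = qm - \rho$, with your version simply writing out the algebra the paper leaves implicit. No issues.
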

\begin{proof} By definition, $W(S)=|P||L|-c=|P||L|-qm+\rho$. Now use the two formulas $|P|=|P \cap L|+|P_q|$ and $m=|P_q|+|D_q|$.
\end{proof}

\begin{corollary} If $W_0(S) \ge 0$, then $S$ satisfies Wilf's conjecture.
\end{corollary}
\begin{proof} We have $|L| \ge q$ since $L \supseteq \{0,1,\dots,q-1\}m$. Thus $|P_q|(|L|-q) \ge 0$, implying $W(S) \ge W_0(S)$ by the above proposition.
\end{proof}

\smallskip

Note that if $S$ is a leaf in the tree of all numerical semigroups~\cite{RGGJ1, RGGJ2, Br08}, \emph{i.e.} if $P=P \cap L$, then $P_q=\emptyset$ and so $W_0(S)=W(S)$ by Proposition~\ref{W0}.

\subsection{Ap\'ery elements}\label{subsection Apery}
As customary, let $\ap(S)=\ap(S,m)$ be the set of \emph{Ap\'ery elements of $S$ with respect to $m$}, namely
\begin{eqnarray*}
\ap(S) & = & \{s \in S \mid s-m \notin S\} \\
& = & \{\min(S \cap (i+m\N)) \mid 0 \le i \le m-1\}.
\end{eqnarray*}
Thus $|\ap(S)|=m$, and each element of $\ap(S)$ is the smallest element of its class mod $m$ in $S$. Note that $\min \ap(S)=0$ and $\max \ap(S) = c+m-1$. Note also that $P\setminus \{m\} \subseteq \ap(S)$. 

\smallskip
For convenience, we shall denote $X = \ap(S)$ and $X_i=X \cap S_i$ for all $i \ge 0$. Note then that $X_0=\{0\}$. 

\begin{proposition}\label{formulas for L and Dq} We have
\begin{eqnarray}
|L| & = & q|X_0|+(q-1)|X_1|+\dots+|X_{q-1}|, \label{equation for L} \\
|D_q| & = & |X_0|+|X_1|+\dots+|X_{q-1}|+|X_q \cap D|. \label{equation for D}
\end{eqnarray}
\end{proposition}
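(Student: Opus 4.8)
The plan is to prove both formulas through a single organizing principle: partition $S$ into its residue classes modulo $m$ and track, class by class, how the chain of semigroup elements $x,x+m,x+2m,\dots$ generated by each Apéry element $x$ meets the slices $S_j$. Since $S$ is a monoid containing $m$ we have $S+m\subseteq S$, and since every element of $S$ is congruent mod $m$ to a unique Apéry element $x$ with $x-m\notin S$, the class of a given $x\in X$ consists exactly of $x,x+m,x+2m,\dots$. If $x\in X_i$, then $x+km\in S_{i+k}$ for every $k\ge 0$, because translation by $m$ shifts each slice to the next one.

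For \eqref{equation for L}, first note that after intersecting with $\N$ the slices $I_0,\dots,I_{q-1}$ tile $[0,c-1]$, whence $L=S_0\sqcup\dots\sqcup S_{q-1}$. I would then count $L$ class by class. The class of an Apéry element $x\in X_i$ contributes the elements $x,x+m,\dots,x+(q-1-i)m$, that is, exactly $q-i$ elements lying in $L$, since the next one, $x+(q-i)m\in S_q$, already sits to the right of $c$. Apéry elements in $X_i$ with $i\ge q$ contribute nothing. Summing over $i$ from $0$ to $q-1$ and grouping classes by the index $i$ with $x\in X_i$ gives $|L|=\sum_{i=0}^{q-1}(q-i)|X_i|$, which is the asserted identity.

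For \eqref{equation for D}, recall that $S_q=I_q$ contains exactly one element in each residue class mod $m$, and that every Apéry element lies in some $X_i$ with $0\le i\le q$, since $\max\ap(S)=c+m-1=\max S_q$. Thus the unique representative in $S_q$ of the class of $x\in X_i$ is $s=x+(q-i)m$. I would decide membership in $D$ class by class: if $i=q$, then $s=x$ and $s\in D$ precisely when $x\in X_q\cap D$; if $i<q$, I claim $s$ is decomposable, since writing $s=(x+(q-i-1)m)+m$ exhibits $s$ as a sum of two elements of $S^*$. Counting the decomposable representatives then yields $|D_q|=\sum_{i=0}^{q-1}|X_i|+|X_q\cap D|$, as claimed, using $|X_0|=1$.

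The one place requiring care — the main obstacle — is verifying that $x+(q-i-1)m$ is a \emph{nonzero} element of $S$ when $i<q$. For $i\ge 1$ this is immediate because $x\ge im-\rho\ge m-\rho\ge 1$. The delicate case is $i=0$, where $X_0=\{0\}$ forces $x=0$ and $s=qm$; here $x+(q-1)m=(q-1)m$ is nonzero exactly when $q\ge 2$, so the decomposition $s=(q-1)m+m$ is genuine in the regime we work in. (For the degenerate value $q=1$ one has $s=m\in P$ and the term $|X_0|$ must be dropped, but this case is trivial and irrelevant to the constructions below, where $q=4$.) Granting this, the class-by-class bookkeeping is routine and both identities follow.
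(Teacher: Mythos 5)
Your proof is correct and follows essentially the same route as the paper's: both count $L$ and $D_q$ residue class by residue class via the Ap\'ery set, which is exactly the content of the partitions $L=\bigsqcup_{0\le i\le q-1}\big(X_i+[0,q-i-1]\cdot m\big)$ and $D_q=(X_q\cap D)\sqcup\bigsqcup_{0\le i\le q-1}\big(X_i+(q-i)m\big)$ displayed in the paper. If anything you are more careful than the source, which covers the decomposability of $X_i+(q-i)m$ for $i<q$ with the phrase ``similar arguments''; your remark that \eqref{equation for D} degenerates at $q=1$ (where $qm=m$ is primitive) is a genuine, though harmless, edge case that the paper leaves implicit and that is irrelevant to its $q=4$ constructions.
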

\begin{proof} There is the partition
\begin{equation*}
L \ = \bigsqcup_{0 \le i \le q-1} \big(X_i+[0,q-i-1]\cdot m\big).
\end{equation*}
Indeed, for all $0 \le i \le q-1$, all $x \in X_i$ and all $j \ge 0$, we have $X_i+jm \subseteq S_{i+j}$ and 
$$
(x+m\N)\cap L = \{x,x+m,\dots,x+(q-i-1)m\}.
$$
Conversely, every $a \in L$ belongs to a unique subset of this form, where $x \in X$ is uniquely determined by the condition $a \equiv x \bmod m$. This yields the stated partition of $L$. Moreover, we have
$$|\left(X_i+[0,q-i-1]\cdot m\right)|=(q-i)|X_i|.$$ 
Whence formula \eqref{equation for L}. Similar arguments give rise to the decomposition 
\begin{equation}\label{Dq}
D_q \ = \ (X_q \cap D) \ \sqcup \bigsqcup_{0 \le i \le q-1} \big(X_i+(q-i)m\big).
\end{equation}
Whence formula \eqref{equation for D}. 
\end{proof}

\section{Constructions}\label{constructions}

In this section, we construct numerical semigroups $S$ such that $c=4m$ and where $W_0(S)$ is arbitrarily small in $\Z$. We start with a construction yielding infinitely many instances satisfying $W_0(S)=-1$. Then, after recalling the notion of $B_h$ sets from additive combinatorics, we use it to construct, for any $n \ge 3$, infinitely many instances satisfying $W_0(S)=-\binom{n}{3}$.

\subsection{Realizing $W_0(S)=-1$}

First a notation from additive combinatorics. For nonempty subsets $A,B$ of $\Z$ or of any additively written group $G$, denote $A+B=\{a+b \mid a \in A, b \in B\}$ and
$2A=A+A$. More generally, for any $h \in \N_+$, denote $hA=\underbrace{A+\dots+A}_h$.

\begin{proposition}\label{new W0=-1}
Let $m,a,b \in \N_+$ satisfy $(3m+1)/2 \le a < b \le (5m-1)/3$. Let $A = \{a,b\}$, and assume that the elements of 
$$A \cup 2A \cup 3A= \{a,b,2a,a+b,2b,3a,2a+b, a+2b,3b\}$$ 
are pairwise distinct mod $m$. Let $S=\vs{m,a,b}_{4m}$. Then $W_0(S)=-1$.
\end{proposition}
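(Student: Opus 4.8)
The plan is to compute the three quantities entering $W_0(S) = |P\cap L|\,|L| - q|D_q| + \rho$ separately, by slicing $S$ into the length-$m$ windows $[jm,(j+1)m-1]$ and reading off each count; the two standing hypotheses are exactly what pin every count down. From $(3m+1)/2 \le a < b \le (5m-1)/3$ I first locate the relevant sums: $m < a < b < 2m$, while $2a, a+b, 2b$ lie in $(3m,4m)$ and $3a, 2a+b, a+2b, 3b$ lie in $(4m,5m)$. Enumerating $\vs{m,a,b}$ window by window then gives $S\cap[0,m-1]=\{0\}$, $S\cap[m,2m-1]=\{m,a,b\}$, $S\cap[2m,3m-1]=\{2m,m+a,m+b\}$, and $S\cap[3m,4m-1]=\{3m,2m+a,2m+b,2a,a+b,2b\}$. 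In particular the largest element of $\vs{m,a,b}$ below $4m$ is $2m+b<4m-1$, so $4m-1\notin\vs{m,a,b}$; since $\vs{m,a,b}_{4m}$ contains $[4m,\infty[$ by construction, this forces $c=4m$, i.e. $q=4$ and $\rho=0$, and the windows above are precisely the intervals $I_0,\dots,I_3$.

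Next I would collect $|P\cap L|$ and $|L|$. The six elements of $S_3$ are genuinely distinct because their residues $0,a,b,2a,a+b,2b$ are pairwise distinct mod $m$: the five nonzero ones by hypothesis, and none is $\equiv 0$ since $a,b\in(m,2m)$ and $2a,a+b,2b\in(3m,4m)$ all avoid the multiples of $m$. Every element of $S_2\cup S_3$ is a sum of at least two generators, hence decomposable, whereas $m,a,b$ are primitive; so $P\cap L=P_1=\{m,a,b\}$ gives $|P\cap L|=3$, and $|L|=|S_0|+|S_1|+|S_2|+|S_3|=1+3+3+6=13$.

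The heart of the argument is the evaluation of $|D_q|=|D_4|$. Because $q=4$, we have $S_4=I_4=[4m,5m-1]$, so $D_4$ is exactly the set of elements of $[4m,5m-1]$ expressible as a sum of at least two generators. Grouping such sums by how many summands equal $a$ or $b$, the range bounds confine the ones landing in $[4m,5m-1]$ to precisely ten: $4m$; $3m+a,3m+b$; $m+2a,m+a+b,m+2b$; and $3a,2a+b,a+2b,3b$. Here the $B_3$-type hypothesis does its work: since $a,b,2a,a+b,2b,3a,2a+b,a+2b,3b$ are pairwise distinct mod $m$ and none is divisible by $m$ (each lies strictly between two consecutive multiples of $m$), these ten elements have pairwise distinct residues mod $m$, hence are distinct, giving $|D_4|=10$. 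Substituting, $W_0(S)=|P\cap L|\,|L|-q|D_q|+\rho=3\cdot 13-4\cdot 10+0=-1$.

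The one genuinely delicate step is this last count. The whole result is a dead heat between $|P\cap L|\,|L|=39$ and $q|D_q|=40$, so a single unexpected coincidence among the nine sums (or a collision with $4m$) would alter $|D_4|$ and spoil the value $-1$; ruling out every such coincidence is exactly the purpose of the pairwise-distinctness assumption, and checking that the range constraints isolate precisely these ten sums in the critical window $I_4$ is where the care is needed.
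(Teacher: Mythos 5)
Your proof is correct and follows essentially the same route as the paper's: locate $A$, $2A$, $3A$ in the windows $I_1$, $I_3$, $I_4$ via the range hypotheses, use the pairwise-distinctness mod $m$ to get exact counts, and conclude from $|P\cap L|=3$, $|L|=13$, $|D_4|=10$. The only (cosmetic) difference is that you enumerate each slice $S_j$ and $D_4$ directly, whereas the paper packages the same counting through the Ap\'ery set $X$ and the general formulas of Proposition~\ref{formulas for L and Dq}.
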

\begin{proof} Note that the inequality $(3m+1)/2 < (5m-1)/3$ implies $m \ge 6$, while the hypothesis on $A \cup 2A \cup 3A$ implies $m \ge 9$. The computation of~$W_0(S)$ requires several steps. 

\medskip
\noindent
\textbf{Claim 1.} \emph{We have}
\begin{equation*}
\begin{array}{rrrrrrr}
m+1 & \le & a & < & b & \le & 2m-2, \\
3m+1 & \le & 2a & < & 2b & \le & 4m-2, \\
4m+1 & \le & 3a & < & 3b & \le & 5m-1.
\end{array}
\end{equation*}

Indeed, these rather loose inequalities follow from the hypotheses on $a,b$. Thus $A \subseteq [m+1,2m-2]$, $2A \subseteq [3m+1,4m-2]$ and $3A \subseteq [4m+1,5m-1]$. 

\medskip
\noindent
\textbf{Claim 2.} \emph{Let $c$ be the conductor of $S$. Then $c=4m$, $q=4$ and $\rho=0$.}

\smallskip
Indeed, since $S=\vs{m,a,b}_{4m}$, we have $c \le 4m$ by construction. By Claim~1, we have 
$$\vs{m,a,b} \cap [3m+1, 4m-1] = (A+2m) \cup 2A \subseteq [3m+1,4m-2].$$
Therefore $4m-1 \notin \vs{m,a,b}$, implying $c=4m$ as desired. Since $q=\lceil c/m \rceil$ and $\rho=qm-c$, we have $q=4, \rho=0$.

\medskip
\noindent
\textbf{Claim 3.} \emph{The elements of $\{0\}\cup A \cup 2A \cup 3A$ are pairwise distinct mod $m$.}

\smallskip
Indeed, the elements of $A \cup 2A \cup 3A$ are pairwise distinct mod $m$ by hypothesis, and it follows from Claim 1 that they are nonzero mod $m$.

\medskip
\noindent
\textbf{Claim 4.} \emph{We have}
$$
X_1  =  A, \ \ X_2  = \emptyset, \ \ X_3  =  2A, \ \ X_4 \cap D  =  3A.
$$

\smallskip
Indeed, it follows from Claim 3 that 
\begin{equation}\label{Ai subset X}
\{0\}\cup A \cup 2A \cup 3A \subseteq X.
\end{equation}
Since $\rho=0$ by Claim 2, we have $I_j=[jm, jm+m-1]$ for all $j \ge 0$. Hence $S_1=S \cap [m, 2m-1]$, $S_2=S \cap [2m, 3m-1]$, $S_3=S \cap [3m, 4m-1]$ and $S_4= [4m, 5m-1]$. Claim 1 then implies $A \subseteq S_1$, $2A \subseteq S_3$ and $3A \subseteq S_4$. On the other hand, since $c=4m$, we have $L=S_0 \cup S_1 \cup S_2 \cup S_3$, and $L \subseteq \vs{m,a,b}$ by construction. Consequently, since $X \cap (S+m)=\emptyset$, we have $X \cap L \subseteq \vs{a,b}$, and similarly $X_4 \cap D \subseteq \vs{a,b}$. Claim 1 then implies $X \cap S_1 \subseteq A$, $X \cap S_2=~\emptyset$, $X \cap S_3 \subseteq 2A$ and $X \cap S_4 \cap D \subseteq 3A$. The fact that these inclusions are equalities follows from \eqref{Ai subset X} and the claim is proved.

\medskip
We are now in a position to compute $W_0(S)=|P \cap L||L|-q|D_q|+\rho$. 
We have $P \cap L=\{m,a,b\}$, $q=4$ and $\rho=0$. Thus $W_0(S)=3|L|-4|D_4|$ here. By Proposition~\ref{formulas for L and Dq}, we have
\begin{eqnarray*}
|L| & = & 4|X_0|+3|X_1|+2|X_2|+|X_3|, \\
|D_4| & = & |X_0|+|X_1|+|X_2|+|X_3|+|X_4 \cap D|. 
\end{eqnarray*}
Of course $X_0=\{0\}$, as noted in Section~\ref{subsection Apery}. Moreover $|X_1|=2$, $|X_2|=0$, $|X_3|=3$ and $|X_4 \cap D|=4$. It follows that $|L|=4 \cdot 1+3\cdot 2+2 \cdot 0+1 \cdot 3=13$ and $|D_4|=1+2+0+3+4=10$. Therefore $W_0(S)=3|L|-4|D_4|=-1$, as stated.
\end{proof}

As an application, we now provide an explicit construction satisfying the hypotheses, and hence the conclusion, of the above result.
\begin{corollary}\label{cor W0=-1} Let $k,m$ be integers such that $k \ge 2$, $m \ge 3k+8$ and $m \equiv k \bmod 2$. Let $a=(3m+k)/2$ and let $S=\vs{m,a,a+1}_{4m}$. Then $W_0(S)=-1$.
\end{corollary}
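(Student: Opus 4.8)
The plan is to reduce everything to Proposition~\ref{new W0=-1}: I would verify that the triple $(m,a,b)$ with $b=a+1$ meets all of its hypotheses, whereupon the stated equality $W_0(S)=-1$ follows immediately.

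First I would dispatch the routine checks. Since $m\equiv k\bmod 2$, the numerator of $a=(3m+k)/2$ satisfies $3m+k\equiv m+k\equiv 0\bmod 2$, so $a$, and hence $b=a+1$, is a positive integer. For the interval condition $(3m+1)/2\le a<b\le(5m-1)/3$, the inequality $a<b$ is clear; the lower bound $(3m+k)/2\ge(3m+1)/2$ amounts to $k\ge 1$, which holds as $k\ge 2$; and clearing denominators in $b=(3m+k+2)/2\le(5m-1)/3$ gives $9m+3k+6\le 10m-2$, i.e.\ $m\ge 3k+8$, which is exactly the standing hypothesis.

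The substantive step is to show that the nine elements of $A\cup 2A\cup 3A$ are pairwise distinct modulo $m$. Because $b=a+1$, they split into three blocks of consecutive integers, namely $\{a,a+1\}$, $\{2a,2a+1,2a+2\}$ and $\{3a,3a+1,3a+2,3a+3\}$, so distinctness within each block is automatic. For distinctness across blocks I would reduce the base points using the bounds of Claim~1 in Proposition~\ref{new W0=-1}: from $a\in[m+1,2m-2]$, $2a\in[3m+1,4m-2]$ and $3a\in[4m+1,5m-1]$ one computes $a\equiv(m+k)/2$, $2a\equiv k$ and $3a\equiv(m+3k)/2$ modulo $m$ (all integers, since $m\equiv k\bmod 2$). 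The three blocks thus occupy the integer windows around $k$, around $(m+k)/2$ and around $(m+3k)/2$; I would check these are disjoint and contained in $[0,m-1]$, so that no reduction wraps around, via $k+2<(m+k)/2$, which follows from $k+4<m$; via $(m+k)/2+1<(m+3k)/2$, which is exactly $k\ge 2$; and via $(m+3k)/2+3\le m-1$, which is again $m\ge 3k+8$.

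With all hypotheses in hand, Proposition~\ref{new W0=-1} yields $W_0(S)=-1$. The only delicate point, and hence the main obstacle, is the residue bookkeeping of the last step: one must pin down each residue as a genuine integer in $[0,m-1]$ and confirm the three windows neither overlap nor wrap around, which is precisely where the lower bound $m\ge 3k+8$ together with $k\ge 2$ are used. Everything else is immediate.
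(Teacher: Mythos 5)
Your proposal is correct and is essentially the paper's own argument: the authors also reduce to Proposition~\ref{new W0=-1} and verify the interval condition from $k\ge 2$ and $m\ge 3k+8$, then prove distinctness mod $m$ by placing the three blocks as $(2A+m)\sqcup(A+3m)\sqcup 3A$ inside $[4m+1,5m-1]$ via a single chain of strict inequalities. Since $2a+m=4m+k$, $a+3m=4m+(m+k)/2$ and $3a=4m+(m+3k)/2$, their chain is exactly your residue windows shifted by $4m$, so the two write-ups are the same computation in different normalizations.
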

\begin{proof} Let $b=a+1$ and $A=\{a,b\}$. It suffices to see that the hypotheses of Proposition~\ref{new W0=-1} on $a,b$ and $A$ are satisfied. The inequalities 
\begin{equation}\label{a < b}
(3m+1)/2 \le a < b \le (5m-1)/3
\end{equation}
follow from the hypotheses $k \ge 2$ and $m \ge 3k+8$. It remains to see that the elements of $A \cup 2A \cup 3A$ are pairwise distinct mod $m$. It is equivalent to see that the elements of $(A+3m) \cup (2A+m) \cup 3A$ are pairwise distinct mod $m$. This in turn follows from the chain of inequalities
\begin{align*}
4m+1 &\le 2a+m < a+b+m < 2b+m \\
&< a+3m < b+3m \\
& < 3a < 2a+b < a+2b < 3b \\
&\le 5m-1,
\end{align*}
all straightforward consequences of the hypotheses and \eqref{a < b}.
\end{proof}

\medskip
The five near-misses up to genus $60$ listed in Table~\ref{table}, and satisfying $W_0(S)=-1$, are all covered by the above two results. Indeed, four of them are of the form $S=\vs{m,a,a+1}_{4m}$ and derive from Corollary~\ref{cor W0=-1}, namely with parameters $(m,k)$ = $(14,2), (16,2), (17,3) \textrm{ and }(18,2)$, respectively. The fifth one, that is $\vs{17,26,28}_{68}$, is not of this form but is still covered by Proposition~\ref{new W0=-1}. Indeed, let $m=17, a=26, b=28$. Then these numbers satisfy all the hypotheses of Proposition~\ref{new W0=-1}, namely 
$$(3m+1)/2 \le a < b \le (5m-1)/3,$$ 
and setting $A=\{a,b\}$, we have $2A+m=\{69, 71, 73\}$, $A+3m=\{77,79\}$ and $3A=\{78, 80, 82, 84\}$, showing that the elements of $A \cup 2A \cup 3A$ are pairwise distinct mod $m$, as required.

\medskip
In order to generalize the above construction and get numerical semigroups $S$ with $q=4$ and $W_0(S)$ negative arbitrarily small, we need the notion of $B_h$ sets from additive combinatorics, specifically for $h=3$.

\subsection{$B_h$ sets}\hspace{0mm}
Let $G$ be an abelian group. Let $A \subseteq G$ be a nonempty finite subset, and let $h \ge 1$ be a positive integer. Then
\begin{equation}\label{hA}
|hA| \le \binom{|A|+h-1}{h}.
\end{equation}
See~\cite[Section 2.1]{T}. This upper bound is best understood by noting that the right-hand side counts the number of monomials of degree $h$ in $|A|$ commuting variables.

We say that $A$ is a \emph{$B_h$ set} if equality holds in \eqref{hA}; equivalently, if for all $a_1, \dots,a_h, b_1,\dots,b_h \in A$, we have
$$
a_1+\dots+a_h=b_1+\dots+b_h
$$
if and only if $(a_1,\dots,a_h)$ is a permutation of $(b_1,\dots,b_h)$. See~\cite[Section 4.5]{T}.

\smallskip
Here are some remarks and examples. The property of being a $B_h$ set is stable under translation in $G$. Clearly, every nonempty finite subset of $G$ is a $B_1$ set and, if $h \ge 2$, every $B_h$ set is a $B_{h-1}$ set.

In $G = \Z$, any subset $A=\{a,b\}$ of cardinality $2$ is a $B_h$ set for all $h \ge 1$, since $|hA|=h+1=\binom{|A|+h-1}{h}$. On the other hand, the subset $A=\{3,4,5\} \subset ~\Z$ is not a $B_2$ set since $3+5=4+4$ in $2A$. Note that $B_2$ sets are also called \emph{Sidon sets}.

For any integer $h \ge 2$, there are arbitrarily large $B_h$ sets in $\N_+$. Take for instance $A=\{1,h, h^2, \dots, h^t\}$ for any $t \ge 1$.

Note that a $B_h$ set in $\Z$ does not necessarily induce a $B_h$ set in the group $\Z/m\Z$. However, for any finite subset $A \subset \Z$ and integer $m \ge |A|$, if $A$ induces a $B_h$ set of cardinality $|A|$ in $\Z/m\Z$, then clearly $A$ itself is a $B_h$ set in $\Z$. 

An instance of a $B_3$ set in $\Z/m\Z$ is provided by Proposition~\ref{new W0=-1}. Indeed, given $m,a,b$ and $A=\{a,b\}$ as in that proposition, the hypothesis there on $A \cup 2A \cup 3A$ means that $A$ induces a $B_3$ set in $\Z/m\Z$.

\subsection{Towards arbitrarily small $W_0(S)$}\label{arbitrary}
We now generalize Proposition~\ref{new W0=-1}, allowing for more than 3 left primitive elements, and yielding numerical semigroups $S$, still with $c=4m$, but now with $W_0(S)$ arbitrarily small in $\Z$. The construction requires large $B_3$ sets in $\N_+$.

\begin{proposition}\label{new construction}
Let $m,a,b,n \in \N_+$ satisfy $n \ge 3$ and 
$$(3m+1)/2 \le a < b \le (5m-1)/3.$$ 
Let $A \subset \N_+$ be a subset of cardinality $|A|=n-1$ with $\min A=a$, $\max A=b$ and inducing a $B_3$ set in $\Z/m\Z$. Let $S=\vs{\{m\} \cup A}_{4m}$. Then $W_0(S)=-\binom{n}{3}$.
\end{proposition}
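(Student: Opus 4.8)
The plan is to mimic the proof of Proposition~\ref{new W0=-1} closely, since the case $n=3$ there is exactly the situation $|A|=2$. The key generalization is that the hypothesis ``$A \cup 2A \cup 3A$ pairwise distinct mod $m$'' is now encoded by the statement that $A$ induces a $B_3$ set in $\Z/m\Z$, and this is precisely what lets us count the cardinalities $|2A|$ and $|3A|$ exactly rather than just bounding them. First I would establish the analogue of Claims~1 and~2: from $(3m+1)/2 \le a < b \le (5m-1)/3$ one gets $A \subseteq [m+1,2m-2]$, hence $2A \subseteq [3m+1,4m-2]$ and $3A \subseteq [4m+1,5m-1]$, which forces $4m-1 \notin \vs{\{m\}\cup A}$ and therefore $c=4m$, $q=4$, $\rho=0$, exactly as before.

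Next I would prove the analogue of Claim~4, namely that with $X=\ap(S)$ one has $X_1 = A$, $X_2 = \emptyset$, $X_3 = 2A$ and $X_4 \cap D = 3A$. The argument is identical to the one in Proposition~\ref{new W0=-1}: since the elements of $\{0\}\cup A \cup 2A \cup 3A$ are pairwise distinct mod $m$ (the $B_3$ hypothesis gives distinctness within $A\cup 2A\cup 3A$, and Claim~1 gives nonzeroness mod $m$), all of them lie in $X$; and since $L \subseteq \vs{\{m\}\cup A}$ while Ap\'ery elements avoid $S+m$, every Ap\'ery element in $L$ (and every decomposable one in $S_4$) lies in $\vs{A}$, which by the interval constraints must sit in $A$, $2A$ or $3A$ in the respective slices. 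The one place requiring slightly more care than the $|A|=2$ case is verifying $X \cap S_2 = \emptyset$: I would check that $\vs{A}$ has no elements in $[2m,3m-1]$, which follows because a nonzero element of $\vs{A}$ in that range would be a single generator (impossible, as $A \subseteq [m+1,2m-2]$ lies below $2m$) or a sum of at least two generators (which by Claim~1 lands in $2A \subseteq [3m+1,\ldots]$, too large), so the middle slice is genuinely empty.

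With Claim~4 in hand, the cardinalities drop out of the $B_3$ property. Here the main difference from the $n=3$ case appears: I would invoke the definition of a $B_3$ set together with the bound~\eqref{hA} to conclude that $|A|=n-1$, $|2A|=\binom{n}{2}$ and $|3A|=\binom{n+1}{3}$, evaluating the binomial $\binom{(n-1)+h-1}{h}$ at $h=2$ and $h=3$. Since $|P\cap L| = |\{m\}\cup A| = n$, $\rho=0$ and $q=4$, Proposition~\ref{formulas for L and Dq} gives
\begin{align*}
|L| &= 4|X_0| + 3|X_1| + 2|X_2| + |X_3| = 4 + 3(n-1) + \binom{n}{2}, \\
|D_4| &= |X_0| + |X_1| + |X_2| + |X_3| + |X_4 \cap D| = 1 + (n-1) + \binom{n}{2} + \binom{n+1}{3}.
\end{align*}
The final step is the arithmetic computation $W_0(S) = n|L| - 4|D_4|$, which I expect to collapse to $-\binom{n}{3}$ after expanding the binomials; as a sanity check, at $n=3$ this recovers $|L|=13$, $|D_4|=10$ and $W_0(S)=-1$.

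The main obstacle, such as it is, will be the final cardinality arithmetic: one must correctly identify $|3A|=\binom{n+1}{3}$ (not $\binom{n}{3}$, since $|A|=n-1$ rather than $n$) and then verify that $n\big(4+3(n-1)+\binom{n}{2}\big) - 4\big(1+(n-1)+\binom{n}{2}+\binom{n+1}{3}\big)$ simplifies exactly to $-\binom{n}{3}$. This is purely mechanical polynomial identity-checking, so the real content of the proof is entirely in recognizing that the $B_3$ hypothesis converts the structural Claim~4 argument of Proposition~\ref{new W0=-1} into the exact count $|hA| = \binom{|A|+h-1}{h}$ for $h \le 3$.
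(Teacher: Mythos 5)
Your proposal is correct and follows essentially the same route as the paper's own proof: the same Claims 1--4 adapted from Proposition~\ref{new W0=-1}, the same use of the $B_3$ property to get $|2A|=\binom{n}{2}$ and $|3A|=\binom{n+1}{3}$ (via equality in \eqref{hA}, noting a $B_3$ set is also $B_2$), and the same final computation via Proposition~\ref{formulas for L and Dq}. The concluding arithmetic indeed collapses to $-\binom{n}{3}$ as you anticipate.
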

Note that for $n=3$, Proposition~\ref{new construction} exactly reduces to Proposition~\ref{new W0=-1}.
\begin{proof} 
The proof generalizes that of Proposition~\ref{new W0=-1}. For convenience, we repeat  most of the arguments while adapting them to the present context.

\medskip
\noindent
\textbf{Claim 1.} \emph{We have}
$$
\begin{array}{rrrrrrr}
m+1 & \le & a & < & b & \le & 2m-2 \\
3m+1 & \le & 2a & < & 2b & \le & 4m-2 \\
4m+1 & \le & 3a & < & 3b & \le & 5m-1
\end{array}
$$

These are easy consequences of the hypotheses on $a,b$. It follows that $A \subseteq [m+1,2m-2]$, $2A \subseteq [3m+1,4m-2]$ and $3A \subseteq [4m+1,5m-1]$. 

\medskip
\noindent
\textbf{Claim 2.} \emph{Let $c$ be the conductor of $S$. Then $c=4m$, $q=4$ and $\rho=0$.}

\smallskip
Indeed, since $S=\vs{\{m\} \cup A}_{4m}$, we have $c \le 4m$, and equality holds since $4m-1 \notin S$ by Claim 1.

\medskip
\noindent
\textbf{Claim 3.} \emph{The elements of $\{0\}\cup A \cup 2A \cup 3A$ are pairwise distinct mod $m$.}

\smallskip
Indeed, the elements of $A \cup 2A \cup 3A$ are pairwise distinct mod $m$ since $A$ induces a $B_3$ set in $\Z/m\Z$ by hypothesis. Furthermore, it follows from Claim 1 that they are nonzero mod $m$. 

\medskip
\noindent
\textbf{Claim 4.} \emph{We have}
$$
X_1  =  A, \ \ X_2  = \emptyset, \ \ X_3  =  2A, \ \ X_4 \cap D  =  3A.
$$

This directly follows from the preceding claims. See the corresponding point in the proof of Proposition~\ref{new W0=-1}.

\medskip
We may now compute $W_0(S)=|P \cap L||L|-q|D_q|+\rho$. 
We have $P \cap L=\{m\} \cup A$, $q=4$ and $\rho=0$. Hence $|P \cap L|=|A|+1=n$, and so $W_0(S)=n|L|-4|D_4|$ here. By Proposition~\ref{formulas for L and Dq}, we have
\begin{eqnarray*}
|L| & = & 4|X_0|+3|X_1|+2|X_2|+|X_3|, \\
|D_4| & = & |X_0|+|X_1|+|X_2|+|X_3|+|X_4 \cap D|. 
\end{eqnarray*}
Of course $X_0=\{0\}$. Moreover, we have $|X_1|=|A|$, $|X_2|=0$, $|X_3|=|2A|$ and $|X_4 \cap D|=|3A|$. Now, since $A$ is a $B_3$ set of cardinality $|A|=n-1$, in $\Z/m\Z$ and hence in $\Z$, we have 
$$|2A|=\binom{n}{2}, \ \ |3A|=\binom{n+1}{3}.$$
It follows that 
\begin{eqnarray}
|L| & = & 4+3(n-1)+\binom{n}{2} \ \ = \ \ \binom{n}{2}+3n+1, \label{|L|}\\
|D_4| & = & \binom{n-2}{0}+\binom{n-1}{1}+\binom{n}{2}+\binom{n+1}{3} \ \ = \ \ \binom{n+2}{3}. \label{|D|}
\end{eqnarray}
A direct computation then yields $\displaystyle W_0(S)=n|L|-4|D_4|=-\binom{n}{3}$, as desired.
\end{proof}

Here is an application of Proposition~\ref{new construction}. We only need a $B_3$ set $A$ in~$\Z$; the other hypotheses will force $A$ to induce a $B_3$ set in $\Z/m\Z$, as required.
\begin{corollary} Let $n \ge 3$ be an integer. Let $A' \subset \N$ be a $B_3$ set of cardinality $n-1$ containing $0$. Let $r = \max A'$. Let $k,m \in \N_+$ satisfy $k \ge r+1$, $m \ge 3k+6r+2$ and $m \equiv k \bmod 2$. Let $a=(3m+k)/2$ and $A=a+A'$. Let $S=\vs{\{m\} \cup A}_{4m}$. Then $W_0(S)=-\binom{n}{3}$.
 \end{corollary}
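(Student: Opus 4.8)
The plan is to deduce the statement directly from Proposition~\ref{new construction}, applied to the data $m$, $a$, $b:=\max A$, $n$ and the set $A=a+A'$. First I would record the elementary facts. Since $0\in A'\subseteq\N$ we have $\min A'=0$, so $\min A=a$ and $\max A=a+r=:b$; since translation is injective, $|A|=|A'|=n-1$; and since $m\equiv k\pmod 2$ the integer $3m+k=2m+(m+k)$ is even, so $a=(3m+k)/2\in\N_+$ and $A\subset\N_+$. It remains to check the two genuine hypotheses of Proposition~\ref{new construction}: the range inequality and the fact that $A$ induces a $B_3$ set in $\Z/m\Z$.

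Next I would verify $(3m+1)/2\le a<b\le (5m-1)/3$. The left inequality is $k\ge 1$, which follows from $k\ge r+1$ and $r=\max A'\ge 0$; the strict inequality $a<b$ amounts to $r\ge 1$, which holds because $|A'|=n-1\ge 2$ and $0\in A'$; and clearing denominators shows that $b=a+r\le (5m-1)/3$ is equivalent to $m\ge 3k+6r+2$, exactly one of the hypotheses.

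The heart of the proof, and the step I expect to be the main obstacle, is to show that $A$ induces a $B_3$ set in $\Z/m\Z$. Since $A'$ is a $B_3$ set in $\Z$ and this property is translation invariant, $A=a+A'$ is a $B_3$ set in $\Z$; the real task is to show that reduction modulo $m$ collapses none of the relevant sums. For this I would locate the residues of the three levels: from $a=(3m+k)/2$ one gets $A\equiv a_0+A'$, $2A\equiv k+2A'$ and $3A\equiv c_0+3A'\pmod m$, where $a_0=(m+k)/2$ and $c_0=(m+3k)/2=a_0+k$ (both integers since $m\equiv k\pmod2$). Because $A'\subseteq[0,r]$, these images lie in the windows $[k,k+2r]$, $[a_0,a_0+r]$ and $[c_0,c_0+3r]$. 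The hypothesis $m\ge 3k+6r+2$ forces $c_0+3r<m$, so no window wraps around $0$, and it also gives $k+2r<a_0$ (equivalently $k+4r<m$), while $k\ge r+1$ gives $a_0+r<c_0$ (equivalently $r<k$). Hence the three windows are pairwise disjoint inside $[1,m-1]$, which rules out every cross-level collision. Within each level, the shifts are injective on these windows, so the $B_3$ (hence $B_2$) property of $A'$ yields $|A\bmod m|=n-1$, $|2A\bmod m|=\binom{n}{2}$ and $|3A\bmod m|=\binom{n+1}{3}$, the last being the maximal value; thus $A\bmod m$ is a $B_3$ set of cardinality $n-1$ in $\Z/m\Z$, and in fact all of $\{0\}\cup A\cup 2A\cup 3A$ are pairwise distinct mod $m$.

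With every hypothesis verified, Proposition~\ref{new construction} applies to $S=\vs{\{m\}\cup A}_{4m}$ and gives $W_0(S)=-\binom{n}{3}$, as claimed. The only delicate point is the bookkeeping of the mod-$m$ windows: once the two quantitative hypotheses $k\ge r+1$ and $m\ge 3k+6r+2$ are seen to force both the disjointness of the three windows and the absence of wraparound, everything else reduces to translation invariance of the $B_3$ property and the cardinality count already performed in the proof of Proposition~\ref{new construction}.
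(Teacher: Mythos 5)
Your proposal is correct and follows essentially the same route as the paper: the paper verifies the range inequality the same way and then shows that $(A+3m)\cup(2A+m)\cup 3A$ sits inside $[4m+1,5m-1]$ with the three parts pairwise disjoint via a chain of inequalities in $a$, $b$, $m$, which is exactly your three disjoint residue windows $[k,k+2r]$, $[a_0,a_0+r]$, $[c_0,c_0+3r]$ translated by $4m$. Both arguments reduce to the same two key facts, $k+4r<m$ and $r<k$, before invoking Proposition~\ref{new construction}.
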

\begin{proof} It suffices to see that $A$ satisfies the hypotheses of Proposition~\ref{new construction}. We have $a=\min A$. Let $b=\max A=a+r$. The required inequalities 
\begin{equation}\label{a < b bis}
(3m+1)/2 \le a < b \le (5m-1)/3
\end{equation}
then follow from the hypotheses on $k,m,A$. Of course $A$ is a $B_3$ set, being a translate of the $B_3$ set $A'$. It remains to see that $A$ induces a $B_3$ set of the same cardinality in $\Z/m\Z$. Let 
\begin{eqnarray*}
C & = & A \cup 2A \cup 3A, \\
C' & = & (A+3m) \cup (2A+m) \cup 3A.
\end{eqnarray*}

\noindent
\textbf{Claim.} \emph{$C' \subseteq [4m+1,5m-1]$ and $A+3m$, $2A+m$, $3A$ are pairwise disjoint.}

\smallskip
Indeed, this follows from the chain of inequalities
\begin{align*}
4m+1 &\le 2a+m < a+b+m < 2b+m \\
&< a+3m < b+3m \\
& < 3a < 2a+b < a+2b < 3b \\
&\le 5m-1,
\end{align*}
all straightforward consequences of the hypotheses and \eqref{a < b bis}. Since $A$ is a $B_3$ set, the elements of $C$ are pairwise distinct in $\Z$, and hence so are the elements of $C'$ by the above claim. Moreover, since $C' \subseteq [4m+1,5m-1]$, its elements are also pairwise distinct mod $m$. Hence $A$ is a $B_3$ set mod $m$, as desired.
\end{proof}

Given $n \ge 3$, here is an explicit infinite family of numerical semigroups~$S$ for which $W_0(S)=-\binom{n}{3}$. Let $$A'=\{3^0-1,3-1,\dots,3^{n-2}-1\}.$$ Then $A'$ is a $B_3$ set of cardinality $n-1$ containing $0$, and hence can be used in the above corollary. Let $r=\max A'=3^{n-2}-1$. Let $k$ be any integer such that $k \ge r+1$. Let then $m=3k+6r+2$, $a_k=(3m+k)/2$, $A_k=a_k+A'$ and $S_k=\vs{\{m\} \cup A_k}_{4m}$. Then $W_0(S_k)=-\binom{n}{3}$ for all $k \ge r+1$.

\section{Satisfying Wilf's conjecture}\label{satisfying Wilf}
In this section, we show that all the near-misses constructed above satisfy Wilf's conjecture.

\begin{proposition}\label{satisfaction} Let $S=\vs{\{m\} \cup A}_{4m}$ be a numerical semigroup as constructed in Proposition~\ref{new construction}. Then $W(S) \ge 9$.
\end{proposition}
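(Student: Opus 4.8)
The plan is to start from Proposition~\ref{W0}, which for our semigroups reads
$$W(S) = W_0(S) + |P_4|\,(|L|-4) = -\binom{n}{3} + |P_4|\left(\binom{n}{2}+3n-3\right),$$
using $q=4$, $W_0(S)=-\binom n3$, and $|L|=\binom n2+3n+1$ from \eqref{|L|}. Since the coefficient $\binom n2+3n-3=\tfrac12(n-1)(n+6)$ is positive, everything reduces to a good lower bound on $|P_4|=|P\cap S_4|$, the number of primitive elements in the top slice $S_4=[4m,5m-1]$. The naive estimate $m\ge\binom{n+2}{3}$, coming from $|D_4|=\binom{n+2}3\le|S_4|=m$, only yields $|P_4|=m-\binom{n+2}3\ge 0$, which is useless here: it would even be consistent with $W(S)=W_0(S)<0$. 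So the heart of the matter is to show that $|P_4|$ is in fact of order $m$.

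To do this I would locate the decomposable elements of $S_4$ precisely. Combining the decomposition \eqref{Dq} for $q=4$ with Claim~4 in the proof of Proposition~\ref{new construction} gives
$$D_4 = 3A \ \sqcup\ \{4m\}\ \sqcup\ (A+3m)\ \sqcup\ (2A+m).$$
Now I use the sharp range hypothesis $(3m+1)/2\le a<b\le(5m-1)/3$, rather than the loose bounds of Claim~1: it forces $2A+m\subseteq[4m+1,(13m-2)/3]$ near the left end of $S_4$, while $A+3m$ and $3A$ both sit in $[(9m+1)/2,5m-1]$ near the right end. Consequently
$$D_4 \subseteq \big[4m,\tfrac{13m-2}{3}\big]\ \cup\ \big[\tfrac{9m+1}{2},5m-1\big],$$
so the open gap $G=\big(\tfrac{13m-2}{3},\tfrac{9m+1}{2}\big)$, of length $(m+7)/6$, lies inside $S_4\subseteq S$ yet meets no decomposable element. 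Hence every integer of $G$ is primitive, giving $|P_4|\ge(m+7)/6-1=(m+1)/6$.

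Finally I combine the two estimates $|P_4|\ge(m+1)/6$ and $m\ge\binom{n+2}3$ to get $|P_4|\ge(\binom{n+2}3+1)/6$, whence
$$W(S)\ \ge\ -\binom{n}{3}+\frac{\binom{n+2}{3}+1}{6}\left(\binom{n}{2}+3n-3\right).$$
It then remains to check that the right-hand side is $\ge 9$ for every integer $n\ge 3$: this is a single polynomial inequality whose leading term $\sim n^5/72$ dominates the $-\binom n3$ term, and which is verified directly for the few small values of $n$. I expect the main obstacle to be the second step, namely realizing that the sharp interval hypothesis confines $D_4$ to the two ends of $[4m,5m-1]$ and isolates a central block of primitive elements of size proportional to $m$; once that structural picture is in place, the rest is either quoted from the preceding results or a routine computation.
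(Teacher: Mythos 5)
Your proposal is correct and follows essentially the same route as the paper: the same identity $W(S)=W_0(S)+|P_4|(|L|-4)$, the same decomposition $D_4=\{4m\}\sqcup(A+3m)\sqcup(2A+m)\sqcup 3A$, and the same key observation that the sharp range hypothesis pushes $D_4$ to the two ends of $S_4$, leaving a central gap of roughly $[4m+m/3,\,4m+m/2]$ consisting entirely of primitives, so that $|P_4|\gtrsim m/6$. The only difference is cosmetic bookkeeping at the end (the paper uses $|P_4|\ge|D_4|/6$ and $(|L|-4)/6\ge1$ to land exactly on $\binom{n+2}{3}-\binom{n}{3}\ge 9$, while you substitute $m\ge\binom{n+2}{3}$ and verify a polynomial inequality in $n$).
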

\begin{proof} We shall freely use any information about $S$ provided in the proof of Proposition~\ref{new construction}. To start with, since $q=4$ here, Proposition~\ref{W0} gives
\begin{equation}\label{W at q=4}
W(S) = |P_4|(|L|-4)+W_0(S).
\end{equation}

\noindent
\textbf{Claim.} We have $|P_4| \ge m/6 \ge |D_4|/6$. 

\smallskip
Indeed, as $S_4=P_4 \sqcup D_4$ and $|S_4|=m$, it follows that $m=|P_4|+|D_4|$. A decomposition of $D_4$ is provided by \eqref{Dq}, namely
\begin{equation}\label{dec D4}
D_4 \ = \ (X_4 \cap D) \ \sqcup \bigsqcup_{0 \le i \le 3} \big(X_i+(4-i)m\big).
\end{equation}
By Claim 4 in the proof of Proposition~\ref{new construction}, we have
$$
X_1  =  A, \ \ X_2  = \emptyset, \ \ X_3  =  2A, \ \ X_4 \cap D  =  3A,
$$
and $X_0=\{0\}$ as always. Injecting this information into \eqref{dec D4} yields
\begin{equation}\label{D4}
D_4 \ = \ \{4m\} \sqcup (A+3m)\sqcup (2A+m) \sqcup 3A.
\end{equation}
By the hypotheses on $a,b$ in~Proposition~\ref{new construction}, and since $a,b$ are integers, we have 
$$
\left\lceil(3m+1)/2\right\rceil  \ \le \ a \ < \  b \ \le \ \left\lfloor(5m-1)/3\right\rfloor.
$$
As easily seen, this implies the following inequalities:
$$
\begin{array}{ccccccc}
m+ \lceil (m+1)/2 \rceil & \le & a & < & b & \le & m+\lfloor (2m-1)/3\rfloor, \\
3m+ 1 & \le & 2a & < & 2b & \le & 3m+\lfloor (m-2)/3\rfloor, \\
4m+ \lceil (m+3)/2 \rceil & \le & 3a & < & 3b & \le & 4m+(m-1).
\end{array}
$$
It follows that
\begin{eqnarray*}
A+3m & \subseteq & 4m+ \left[\lceil(m+1)/2\rceil, \lfloor(2m-1)/3\rfloor\right], \\
2A+m & \subseteq & 4m+ \left[1, \lfloor(m-2)/3\rfloor\right], \\
3A & \subseteq & 4m+ \left[\lceil(m+3)/2\rceil, m-1\right].
\end{eqnarray*}
Now, the point is that these subsets of $S_4=4m+[0,m-1]$ completely avoid the subinterval
\begin{eqnarray*}
J & = & 4m+\left[\lfloor(m-2)/3\rfloor+1, \lceil(m+1)/2\rceil-1 \right] \\
& = & 4m+\left[\lfloor(m+1)/3\rfloor, \lceil(m-1)/2\rceil \right].
\end{eqnarray*}
Thus by \eqref{D4}, we have
$$
D_4 \cap J \ = \ \emptyset.
$$
Since $J \subseteq S_4=P_4 \sqcup D_4$, it follows that $J \subseteq P_4$. Now, as easily seen by considering the six possible classes of $m$ mod 6, we have
$$
|J| \ = \ \lceil(m-1)/2\rceil-\lfloor(m+1)/3\rfloor+1  \ \ge \ m/6
$$
for all $m \in \N_+$. It follows as claimed that $|P_4| \ge m/6$, and also $|P_4| \ge |D_4|/6$ since $m \ge |D_4|$.

\medskip
Plugging the latter estimate on $|P_4|$ into \eqref{W at q=4} yields
\begin{equation}\label{lower}
W(S) \ \ge \ |D_4|(|L|-4)/6+W_0(S).
\end{equation}
By \eqref{|L|}, we have $\displaystyle |L|-4 = \binom{n}{2}+3(n-1)$, where $n=|P \cap L|=|A|+1$. Hence $(|L|-4)/6 \ge 1$ since $n \ge 3$ by assumption. By \eqref{lower}, this implies 
$$
W(S) \ \ge \ |D_4|+W_0(S).
$$
By Proposition~\ref{new construction} and its proof, we have
$$|D_4|=\binom{n+2}{3}, \quad W_0(S) = -\binom{n}{3}.$$
Whence $W(S) \ge 9$, as desired.
\end{proof}

\subsection{Conjectures} For $q=4$, the lower bound on $W_0(S)$ in terms of $|P \cap L|$ provided by Proposition~\ref{new construction} might well be optimal.

\begin{conjecture}\label{conj 1} Let $S$ be a numerical semigroup with $q=4$ and $|P \cap L|=n$. Then $\displaystyle W_0(S) \ge -\binom{n}{3}$.
\end{conjecture}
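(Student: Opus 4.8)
The plan is to push the whole computation through the Apéry slices of Section~\ref{subsection Apery} and then control the decomposable elements by sumset inequalities, the target $\binom{n}{3}$ being a $3$-fold ($h=3$) phenomenon exactly as in Proposition~\ref{new construction}. Since $q=4$ and $X_0=\{0\}$, the formulas of Proposition~\ref{formulas for L and Dq} read $|L|=4+3|X_1|+2|X_2|+|X_3|$ and $|D_4|=1+|X_1|+|X_2|+|X_3|+|X_4\cap D|$. Writing $x_i=|X_i|$ and $d_4=|X_4\cap D|$ and substituting into $W_0(S)=n|L|-4|D_4|+\rho$ gives
\begin{equation*}
W_0(S)=4(n-1)+(3n-4)\,x_1+(2n-4)\,x_2+(n-4)\,x_3-4d_4+\rho .
\end{equation*}
For $n\ge 4$ all three coefficients $3n-4,\,2n-4,\,n-4$ are nonnegative and $4(n-1)+\rho\ge 0$, so the sole negative contribution is $-4d_4$, and the problem reduces to proving the upper bound $4d_4\le 4(n-1)+(3n-4)x_1+(2n-4)x_2+(n-4)x_3+\rho+\binom{n}{3}$. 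The cases $n\le 3$ I would treat separately, $n=3$ being the boundary case for which the family of Proposition~\ref{new W0=-1} shows the bound $W_0(S)\ge -1$ is sharp.

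The structural engine is the elementary observation that \emph{every decomposable Apéry element is a sum of two nonzero Apéry elements}. Indeed, if $x\in X\cap D$ and $x=a+b$ with $a,b\in S^*$, let $y,z$ be the Apéry elements of the residue classes of $a,b$; then $y\le a$ and $z\le b$, so $y+z\le x$, while $y+z\in S$ lies in the class of $x$, whence $x=y+z$ by minimality of $x$ in its class; moreover $y,z\neq 0$, since $a\equiv 0 \bmod m$ would force $x-m=(a-m)+b\in S$, contradicting $x\in X$. Together with the slice arithmetic that a sum of elements from slices $i$ and $j$ lands in slice $i+j$ or $i+j+1$, this yields (for $\rho=0$) the containments
\begin{align*}
X_2\cap D &\subseteq 2X_1, & X_3\cap D &\subseteq (2X_1)\cup(X_1+X_2), \\
X_4\cap D &\subseteq (X_1+X_3)\cup(2X_2)\cup(X_1+X_2),
\end{align*}
each intersected with the appropriate slice; for $\rho\ge 1$ an extra term of type $X_2+X_3$ must be allowed in the last line. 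Since $X_i=P_i\sqcup(X_i\cap D)$ with $P_2\subseteq X_2$, $P_3\subseteq X_3$ and $X_1=P_1\setminus\{m\}$, and since $n=p_1+p_2+p_3$ with $p_1=x_1+1$ (writing $p_i=|P_i|$), these containments bound $x_2,x_3$ and $d_4$ in terms of $x_1=n-1-p_2-p_3$ and the higher-slice primitive counts $p_2,p_3$, using the cardinality bound \eqref{hA} for iterated sumsets.

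The conjecture should then follow by showing that $W_0(S)$ is minimized exactly when all $n$ left primitives lie in slice $1$ --- so that $p_2=p_3=0$, $x_1=n-1$, and $X_3\subseteq 2X_1$, $X_4\cap D\subseteq 3X_1$ --- when they form a $B_3$ set modulo $m$ and $\rho=0$: this is precisely the family of Proposition~\ref{new construction}, where $d_4=|3X_1|=\binom{n+1}{3}$ saturates \eqref{hA} and $W_0(S)=-\binom n3$. This last extremality step is the main obstacle, and the reason the statement is only a conjecture. The bound on $d_4$ is not a single sumset inequality but a cascade: the primitive ``budget'' $n=p_1+p_2+p_3$ may be split across the three left slices, and decomposables propagate from slice to slice through iterated sums, so that the crude estimates $|X_1+X_3|\le x_1x_3$ etc.\ overshoot the target by an order of magnitude. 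Making the argument work seems to demand a genuine additive-combinatorial monotonicity result --- an iterated or Pl\"unnecke--Ruzsa-type sumset estimate, or a compression argument showing that pushing a primitive down from a higher slice, and removing sumset collisions, never increases $W_0(S)$ --- together with a separate check that the additional sumset terms appearing when $\rho\ge 1$ cannot yield a more negative value.
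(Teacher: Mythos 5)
The statement you are trying to prove is Conjecture~\ref{conj 1}, which the paper leaves open: the authors supply no proof, only the construction of Proposition~\ref{new construction} showing that the bound $-\binom{n}{3}$, if correct, is attained. Your text is likewise not a proof, as you yourself concede in the final paragraph. What you do establish is sound as far as it goes: the identity $W_0(S)=4(n-1)+(3n-4)x_1+(2n-4)x_2+(n-4)x_3-4d_4+\rho$ follows correctly from Proposition~\ref{formulas for L and Dq}; the fact that every decomposable Ap\'ery element is a sum of two nonzero Ap\'ery elements is true and your argument for it is correct; and the resulting slice containments are the natural starting point. But the entire content of the conjecture lives in the step you defer. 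The required inequality $4d_4\le 4(n-1)+(3n-4)x_1+(2n-4)x_2+(n-4)x_3+\rho+\binom{n}{3}$ holds with \emph{equality} in the extremal configuration ($x_1=n-1$, $x_2=0$, $x_3=\binom{n}{2}$, $d_4=\binom{n+1}{3}$, $\rho=0$), so there is no slack to absorb crude estimates; yet the trivial bound $|X_1+X_3|\le x_1x_3$ already yields about $3\binom{n+1}{3}$ in that configuration, overshooting by a factor of roughly $3$. Bridging that gap demands an inverse-type sumset or compression argument that neither you nor the paper provides --- which is exactly why the statement remains a conjecture.

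Two further points to watch if you pursue this. First, when $\rho\ge 1$ a sum of elements from slices $i$ and $j$ can land in slice $i+j-1$ as well as $i+j$ or $i+j+1$, so the extra low-slice terms must be added to \emph{all} of your containments, not only the one for $X_4\cap D$. Second, the case $n=3$ is not disposed of by your coefficient-sign argument (there $n-4<0$), and it is not proved anywhere: Proposition~\ref{new W0=-1} shows that $W_0(S)=-1$ is \emph{achieved} for some semigroups with $q=4$ and $|P\cap L|=3$, which says nothing about whether $W_0(S)\ge -1$ holds for all of them. Even this base case of the conjecture is open.
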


Here is a more precise formulation.
\begin{conjecture}\label{conj 2} Let $S$ be a numerical semigroup of multiplicity $m$ with $q=4$ and $|P \cap L|=n$. Then the minimum of $W_0(S)-\rho$ should be attained exactly when the following conditions simultaneously hold:
\begin{enumerate}
\item $P \cap L \subseteq S_1$,
\item $X_1$ induces a $B_3$ set in $\Z/m\Z,$
\item $X_2=\emptyset,$ $X_3 = 2X_1$ and $X_4 \cap D=3X_1$.
\end{enumerate}
\end{conjecture}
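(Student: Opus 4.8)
The plan is to realize $W_0(S)-\rho$ as a linear functional of the slice data of the Apéry set and then minimize it as a constrained integer program, with the constraints coming from primitivity and from additive combinatorics. Write $X=\ap(S)$, $x_i=|X_i|$ and $d=|X_4\cap D|$. Since $q=4$, Proposition~\ref{formulas for L and Dq} gives $|L|=4+3x_1+2x_2+x_3$ and $|D_4|=1+x_1+x_2+x_3+d$, so that
\begin{equation*}
W_0(S)-\rho \;=\; n|L|-4|D_4| \;=\; (4n-4)+(3n-4)x_1+(2n-4)x_2+(n-4)x_3-4d .
\end{equation*}
Read per Apéry element, each $x\in X_i$ with $0\le i\le 3$ contributes $n(4-i)-4$, while each decomposable Apéry element of the top slice contributes $-4$. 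First I would record signs: for $n\ge 3$ the coefficients of $x_1$ and $x_2$ are strictly positive, the coefficient of $x_3$ is $n-4$ (negative only at $n=3$), and $d$ always enters negatively. Thus lowering $W_0(S)-\rho$ means pushing $d$ up and $x_1,x_2$ down, against the constraints below.

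Next I would extract the structural constraints. Among the Apéry elements in slices $1,2,3$ exactly $n-1$ are primitive, namely $(P\cap L)\setminus\{m\}$: indeed $S_1=P_1$ forces $X_1=P_1\setminus\{m\}$, so $|P_1|=x_1+1$, while $P_i\subseteq X_i$ for $i=2,3$. Hence the number of \emph{decomposable} Apéry elements inside $L$ equals $(x_1+x_2+x_3)-(n-1)$, giving the basic inequality $x_1+x_2+x_3\ge n-1$, with equality exactly when $L$ contains no decomposable Apéry element. The heart is then an additive bound on the decomposable Apéry elements: each lies in $D\cap X$ and is a sum of primitives, so after the Apéry minimality constraint a decomposable slice-$3$ (resp. slice-$4$) Apéry element is a $2$-fold (resp. $3$-fold) sum of generators. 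In the clean regime where the generators other than $m$ form a set $A$ with $|A|=n-1$ sitting in slice $1$, these are counted by $|2A|$ and $|3A|$, bounded via \eqref{hA} by $\binom{n}{2}$ and $\binom{n+1}{3}$, with equality if and only if $A$ induces a $B_3$ set in $\Z/m\Z$.

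Assembling these, the optimization collapses: substituting the maximal admissible $x_1=n-1$, $x_2=0$, $x_3=\binom n2$, $d=\binom{n+1}{3}$ into the displayed functional returns exactly $-\binom n3$, matching the value realized in Proposition~\ref{new construction}. Any slack in the sumset bounds, any decomposable Apéry element inside $L$, or any generator outside slice $1$ must strictly increase $W_0(S)-\rho$, and the equality analysis is what pins down conditions (1)--(3). Condition (1), $P\cap L\subseteq S_1$, is forced by a slice-placement trade-off: moving a generator from slice $1$ to slice $2$ or $3$ lowers its own positive contribution only by $O(n)$, but it can then appear in far fewer top-slice sums, destroying $\Theta(n^3)$ worth of the valuable $-4$ terms, a strict net loss. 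Conditions $X_2=\emptyset$, $X_3=2X_1$ and $X_4\cap D=3X_1$ are forced by equality in the respective sumset bounds, and the $B_3$ condition on $X_1$ is forced by the rigidity case of \eqref{hA}, where $B_3$ (hence $B_2$) sets are the unique maximizers of both $|2A|$ and $|3A|$.

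The main obstacle I anticipate is making the slicewise additive bounds and the placement trade-off rigorous \emph{without} the range hypothesis $(3m+1)/2\le a<b\le(5m-1)/3$ that trivializes the geometry in Proposition~\ref{new construction}. In the general setting the generators may occupy any of slices $1,2,3$, sums may wrap around modulo $m$, and the passage between additive structure in $\Z$ and its reduction modulo $m$ (the gap between a $B_3$ set in $\Z$ and one in $\Z/m\Z$) must be controlled with care; in particular one must show that the Apéry minimality constraint never destroys more top-slice sums than $\binom{n+1}{3}$ permits, and that the only way to saturate every bound at once is the configuration of (1)--(3). Establishing this rigidity --- an exact-equality, uniqueness statement for an iterated modular sumset, together with the global comparison across all slice placements --- is, I expect, the genuinely hard part, and is presumably why the statement is only conjectured.
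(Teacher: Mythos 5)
This statement is one the paper does \emph{not} prove: it is stated as an open conjecture (the authors only remark that it implies Conjecture~\ref{conj 1}), so there is no paper proof to compare against, and your attempt must be judged on whether it closes the conjecture on its own. It does not, and you candidly say so in your final paragraph; that admission is accurate, and what you leave unproved is precisely the entire content of the conjecture.

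Concretely, the part you do carry out --- writing $W_0(S)-\rho=(4n-4)+(3n-4)x_1+(2n-4)x_2+(n-4)x_3-4d$ via Proposition~\ref{formulas for L and Dq} and checking that the configuration $x_1=n-1$, $x_2=0$, $x_3=\binom{n}{2}$, $d=\binom{n+1}{3}$ evaluates to $-\binom{n}{3}$ --- is correct, but it only re-runs the computation already in the proof of Proposition~\ref{new construction}; it verifies the conjectured optimum, it does not show optimality. The minimization step is where the argument is vacuous. First, the constraints you feed into the ``integer program'' are not established in general: a decomposable Ap\'ery element of $S_4$ need not be a $3$-fold sum of slice-$1$ generators (it could be the sum of a slice-$1$ and a slice-$3$ primitive, or of two slice-$2$ primitives, possibly wrapping mod $m$), so $d\le\binom{n+1}{3}$ has no justification outside the ``clean regime'' that you assume rather than prove. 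Second, and worse, even if one grants all the constraints you state ($x_1+x_2+x_3\ge n-1$, $x_3\le\binom{n}{2}$, $d\le\binom{n+1}{3}$), the program does not ``collapse'' to $-\binom{n}{3}$: for $n\ge 5$ all three coefficients $3n-4$, $2n-4$, $n-4$ are positive, so the linear functional is minimized at $x_1=x_2=0$, $x_3=n-1$, $d=\binom{n+1}{3}$, giving $n^2-n-4\binom{n+1}{3}$, which is far below the conjectured minimum (at $n=5$ this is $-60$ versus $-10$). So your stated constraint set would ``prove'' a bound strictly weaker than the conjecture; the true content must lie in constraints coupling $d$ and $x_3$ to the location of the primitives, which you only gesture at with heuristics (``a strict net loss'', ``$\Theta(n^3)$ worth of $-4$ terms''). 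Formulating and proving those coupled bounds, together with the equality rigidity for modular sumsets, is the open problem itself.
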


We leave it to the reader to see that Conjecture~\ref{conj 2} implies Conjecture~\ref{conj 1}, for instance by following the proof of Proposition~\ref{satisfaction}.


\begin{thebibliography}{aaa}
\bibitem{Br08} \textsc{M. Bras-Amor\'os}, Fibonacci-like behavior of the number of numerical semigroups of a given genus,  Semigroup Forum 76  (2008) 379--384.
\bibitem{D} {\sc M. Delgado}, On a question of Eliahou and a conjecture of Wilf, Math. Zeitschrift (2017), DOI 10.1007/s00209-017-1902-3.
\bibitem{DG} \textsc{M. Delgado, P.A. Garc\'{\i}a-S\'anchez and J. Morais}, ``Numericalsgps'': a GAP package on numerical semigroups. (\url{http://www.gap-system.org/Packages/numericalsgps.html})
\bibitem{DM} \textsc{D. Dobbs and G. Matthews}, On a question of Wilf concerning numerical semigroups, in: Focus on Commutative Rings Research, Nova Sci. Publ., New York, 2006, pp. 193--202. 
\bibitem{E} {\sc S. Eliahou}, Wilf's conjecture and Macaulay's theorem, (2015) arXiv:1703.01761 [math.CO].
\bibitem{FGH} \textsc{R. Fr\"oberg, C. Gottlieb and R. H\"aggkvist}, On numerical semigroups, Semigroup Forum 35 (1987) 63--83.
\bibitem{FH} \textsc{J. Fromentin and F. Hivert}, Exploring the tree of numerical semigroups, Math. Comp.  85 (2016) 2553--2568.
\bibitem{K} \textsc{N. Kaplan}, Counting numerical semigroups by genus and some cases of a question of Wilf, J. Pure Appl. Algebra 216 (2012) 1016--1032.
\bibitem{MS} \textsc{A. Moscariello and A. Sammartano}, On a conjecture by Wilf about the Frobenius number, Math. Z. 280 (2015) 47--53. 
\bibitem{R} \textsc{J.L. Ram\'\i{}rez Alfons\'\i{}n}, The Diophantine Frobenius problem. Oxford Lecture Series in Mathematics and its Applications 30, Oxford University Press, Oxford, 2005.
\bibitem{RG}  \textsc{J.C. Rosales and P.A. Garc\'\i{}a-S\'anchez},  Numerical semigroups. Developments in Mathematics, 20. Springer, New York, 2009.
\bibitem{RGGJ1} \textsc{J.C. Rosales, P.A. Garc\'\i{}a-S\'anchez, J.I. Garc\'\i{}a-Garc\'\i{}a and J.A. Jim\'enez Madrid}, The oversemigroups of a numerical semigroup, Semigroup Forum 67 (2003) 145--158.
\bibitem{RGGJ2} \textsc{J.C. Rosales, P.A. Garc\'\i{}a-S\'anchez, J.I. Garc\'\i{}a-Garc\'\i{}a and J.A. Jim\'enez Madrid}, Fundamental gaps in numerical semigroups, J. Pure Appl. Algebra 189  (2004) 301--313.
\bibitem{Sa} \textsc{A. Sammartano}, Numerical semigroups with large embedding dimension satisfy Wilf's conjecture, Semigroup Forum 85 (2012) 439--447.
\bibitem{Se} \textsc{E.S. Selmer}, On a linear Diophantine problem of Frobenius, J. Reine Angew. Math. 293/294 (1977) 1--17.
\bibitem{Sy} \textsc{J.J. Sylvester}, Mathematical questions with their solutions, Educational Times 41 (1884) 21.
\bibitem{T} {\sc T. Tao and V. Vu}, Additive Combinatorics. Cambridge Studies in Advanced Mathematics, 105. Cambridge University Press, Cambridge,  2006. ISBN: 978-0-521-85386-6; 0-521-85386-9.
\bibitem{W} \textsc{H. Wilf}, A circle-of-lights algorithm for the money-changing problem, Amer. Math. Monthly 85 (1978) 562--565.
\bibitem{Z} \textsc{A. Zhai}, Fibonacci-like growth of numerical semigroups of a given genus, Semigroup Forum 86 (2013) 634--662. 
\end{thebibliography}
\end{document}